\begin{document}

\newtheorem{theorem}{Theorem}[section]
\newtheorem{definition}{Definition}[section]
\newtheorem{corollary}[theorem]{Corollary}
\newtheorem{lemma}[theorem]{Lemma}
\newtheorem{proposition}[theorem]{Proposition}
\newtheorem{example}[theorem]{Example}
\newtheorem{remark}[theorem]{Remark}
\newtheorem{claim}[theorem]{Claim}

\font\sixbb=msbm6
\font\eightbb=msbm8
\font\twelvebb=msbm10 scaled 1095
\newfam\bbfam
\textfont\bbfam=\twelvebb \scriptfont\bbfam=\eightbb
                           \scriptscriptfont\bbfam=\sixbb

\newcommand{\tr}{{\rm tr \,}}
\newcommand{\linspan}{{\rm span\,}}
\newcommand{\rank}{{\rm rank\,}}
\newcommand{\diag}{{\rm Diag\,}}
\newcommand{\Image}{{\rm Im\,}}
\newcommand{\Ker}{{\rm Ker\,}}

\def\bb{\fam\bbfam\twelvebb}
\newcommand{\enp}{\begin{flushright} $\Box$ \end{flushright}}
\def\cD{{\mathcal{D}}}
\def\N{\bb N}
\def\R{\bb R}
\def\Z{\bb Z}
\def\Q{\bb Q}

\title{Approximate isometries of Hilbert spaces 
\thanks{This research was supported by grants P1-0288 and N1-0368 from ARIS, Slovenia.}}
\author{
Peter \v Semrl\footnote{Institute of Mathematics, Physics, and Mechanics, Jadranska 19, SI-1000 Ljubljana, Slovenia; Faculty of Mathematics and Physics, University of Ljubljana,
        Jadranska 19, SI-1000 Ljubljana, Slovenia,  peter.semrl@fmf.uni-lj.si}
        }

\date{}
\maketitle

\begin{abstract}
We improve the Hyers - Ulam stability result for isometries of real Hilbert spaces by removing the surjectivity assumption.
\end{abstract}
\maketitle

\bigskip
\noindent AMS classification: 46B04

\bigskip
\noindent
Keywords: Hilbert space, isometry, approximate isometry.


\section{Introduction and statement of the main result}

Throughout the paper all Hilbert spaces will be over the real field.
Let $H$ and $K$ be Hilbert spaces and $\varepsilon > 0$.
A map $f: H \to K$ is called an $\varepsilon$-isometry if
$$
| \, \| f(x) - f(y) \| - \| x - y \| \, | \le \varepsilon
$$
for all $x,y \in H$. Note that when studying approximate isometries there is no loss of generality in assuming that they map the origin of $H$ to the origin of $K$. Indeed, a map $f: H \to K$ is an $\varepsilon$-isometry
if and only if the map $x \mapsto f(x) - f(0)$, $x \in H$, is an  $\varepsilon$-isometry from $H$ to $K$.  
In 1945 Hyers and Ulam proved the stability result for isometries of Hilbert spaces.

\begin{theorem}\label{maje}\cite[Theorem 4]{HyU}
Let $H$ be a Hilbert space and $f : H \to H$ a surjective $\varepsilon$-isometry satisfying $f(0) = 0$. Then the map $U : H \to H$ defined by
$$
Ux = \lim_{n \to \infty} {1 \over 2^n} f\left( 2^n x \right)
$$
is a linear bijective isometry (an orthogonal operator) and we have
\begin{equation}\label{ten}
\| f(x) - Ux \| \le 10 \varepsilon
\end{equation}
for every $x \in H$.
\end{theorem}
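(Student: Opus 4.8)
The plan is to analyze the sequence $f_n(x) = 2^{-n} f(2^n x)$ directly. First I would record the elementary consequences of the hypothesis: taking $y = 0$ gives $|\,\|f(x)\| - \|x\|\,| \le \varepsilon$, and I would use the polarization identity $\langle u, v\rangle = \frac12(\|u\|^2 + \|v\|^2 - \|u - v\|^2)$ throughout to pass between distance estimates and inner products, together with the median (Apollonius) identity $\|p - a\|^2 + \|p - b\|^2 = 2\|p - \frac{a+b}{2}\|^2 + \frac12\|a - b\|^2$.

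For convergence of $Ux$, I would estimate $\|f(2a) - 2f(a)\|$ via the median identity applied to the triangle with vertices $0 = f(0)$, $f(a)$, $f(2a)$: since $\|f(a)\|$ and $\|f(a) - f(2a)\|$ are at most $\|a\| + \varepsilon$ while $\|f(2a)\| \ge 2\|a\| - \varepsilon$, the point $f(a)$ is forced close to the midpoint $\frac12 f(2a)$, giving a bound of order $\|a\|\varepsilon$. With $a = 2^n x$ this yields $\|f_{n+1}(x) - f_n(x)\| = O(2^{-n/2})$, so the sequence is Cauchy and $Ux$ exists. Dividing the defining inequality by $2^n$ and letting $n \to \infty$ gives $\|Ux - Uy\| = \|x - y\|$ exactly; since also $U0 = 0$, polarization shows $U$ preserves inner products, and in a Hilbert space an inner-product preserving map is automatically additive and homogeneous, hence linear. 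Injectivity is then immediate.

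The hard part is the uniform estimate $\|f(x) - Ux\| \le 10\varepsilon$. The naive telescoping of the Cauchy estimate only yields a bound of order $\sqrt{\|x\|\varepsilon}$, which grows with $\|x\|$; the local, pointwise estimates above are genuinely too lossy because they discard all directional information. To recover a uniform bound I would exploit a rigidity phenomenon along the ray through $x$: the collinear, equally spaced points $0, x, 2x, \ldots, nx$ have images whose consecutive gaps all have norm within $\varepsilon$ of $\|x\|$ while the endpoints lie at distance within $\varepsilon$ of $n\|x\|$, so near-equality in the triangle inequality forces these increments to be nearly parallel to the unit vector $e = Ux/\|x\|$. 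Decomposing $f(kx)$ into its components along and orthogonal to $e$ and feeding the constraints $\|f(jx) - f(kx)\| \in [\,|j-k|\,\|x\| - \varepsilon,\; |j-k|\,\|x\| + \varepsilon]$ back in, I expect both the orthogonal deviation and the parallel error to be controlled by a constant multiple of $\varepsilon$ uniformly in $k$; specializing to $k = 1$ then gives the claimed bound, and chasing the constant down to $10\varepsilon$ is the routine but delicate part I would defer.

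Finally, bijectivity of $U$ is where the surjectivity hypothesis enters. As a linear isometry of a complete space, $U$ has closed range. Given the uniform bound, every value $f(x)$ lies within $10\varepsilon$ of $\Image U$; since $f$ is surjective, $\Image U$ is $10\varepsilon$-dense in $H$. A closed proper subspace cannot be $10\varepsilon$-dense, so $\Image U = H$ and $U$ is an orthogonal operator. I expect the rigidity step of the previous paragraph to be the main obstacle, and I note that surjectivity is confined to this last step.
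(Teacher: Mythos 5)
Your first three steps are sound: the Cauchy estimate for $2^{-n}f(2^n x)$ via the Apollonius identity, the exact isometry property of $U$ obtained by dividing the defining inequality by $2^n$, and linearity via polarization are essentially the argument of Theorem 1 of Hyers and Ulam, which this paper recalls (without proof) at the start of Section 3. (One slip: the Apollonius computation bounds $\|f(2a)-2f(a)\|$ by a quantity of order $\sqrt{\varepsilon\|a\|}$, not $\varepsilon\|a\|$; your subsequent $O(2^{-n/2})$ claim is consistent with the correct square-root bound.) The genuine gap is precisely the step you defer as ``routine but delicate.'' The rigidity-along-a-ray argument cannot yield any uniform bound $\|f(x)-Ux\|\le C\varepsilon$, because it nowhere uses surjectivity, and the uniform bound is false without surjectivity. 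Concretely, the map $f:\mathbb{R}\to\mathbb{R}^2$ with $f(t)=(t,\sqrt{2\varepsilon t})$ for $t\ge 0$ and $f(t)=(t,0)$ for $t\le 0$ --- the very example reproduced in Section 2 of this paper --- is an $\varepsilon$-isometry with $f(0)=0$ and $Ut=(t,0)$. It satisfies every constraint you propose to feed into the rigidity step: all pairwise distances obey $\|f(jx)-f(kx)\|\in[\,|j-k|\,\|x\|-\varepsilon,\ |j-k|\,\|x\|+\varepsilon\,]$, and each increment $f((j+1)x)-f(jx)$ is indeed nearly parallel to $e=(1,0)$. Yet the orthogonal deviation of $f(kx)$ is $\sqrt{2\varepsilon k\|x\|}$, unbounded in $k$, and at $k=1$ it is $\sqrt{2\varepsilon\|x\|}$, unbounded in $x$. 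The heuristic fails because ``nearly parallel'' increments can all drift orthogonally in the same direction, and these small drifts accumulate; the constraints you list only prevent the accumulated drift from growing linearly, not from growing like $\sqrt{\varepsilon k\|x\|}$.

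This is not an accident of bookkeeping: the paper's own proof of its Theorem 3 (the nonsurjective result) runs exactly the ray computation you envision --- intersecting the balls $B_1=\{y:\|y\|\le r+\varepsilon\}$ and $B_2=\{y:\|y-f(kx)\|\le(k-1)r+\varepsilon\}$ --- and what that argument produces is precisely $\|f(x)-Ux\|\le\sqrt{6\varepsilon\|x\|+\varepsilon^2}$, a square-root bound which Section 2 shows has the correct order of growth for nonsurjective maps. So your approach saturates exactly where the telescoping did, and your closing remark that ``surjectivity is confined to this last step'' (bijectivity of $U$) is backwards: the deduction of bijectivity from the uniform bound is the easy part, while surjectivity is indispensable for the uniform $10\varepsilon$ estimate itself --- this is exactly what Hyers and Ulam's example was constructed to show, as the paper emphasizes in the Introduction. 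A repair must inject surjectivity into the deviation estimate, for instance (as in the original Hyers--Ulam proof, and in the later Omladi\v{c}--\v{S}emrl sharpening to $2\varepsilon$) by using preimages under $f$ of points of the image of $U$, i.e.\ an approximate inverse of $f$, to constrain $f(x)$ from directions transverse to the ray through $x$; without some such device no bound independent of $\|x\|$ can be proved. Note also that the paper itself states the theorem you were asked to prove as a citation of \cite[Theorem 4]{HyU} and contains no proof of it, so the only internal yardstick is the Section 3 machinery, which, as explained, stops short of the uniform bound.
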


Hyers and Ulam gave an example showing that the surjectivity assumption is indispensable in their theorem.
The above result was the beginning of a quite intensive study of approximate isometries not only on Hilbert spaces but on more general real Banach spaces, see for example
\cite{Bou1, Bou2, Bou3, Bou4, Gev, Gru, HUl}. It took until 1995 for finding the optimal bound in (\ref{ten}) which is $2 \varepsilon$ \cite[Main Theorem]{OmS}. It should be mentioned that in this result the surjectivity assumption can be replaced by the weaker approximate surjectivity condition \cite{SeV}. And in the special case when $\dim H = \dim K < \infty$ the stability result holds true even in the absence of the surjectivity assumption \cite{Dil}. Some more details including the generalizations to arbitrary real Banach spaces can be found in \cite[Chapter 15]{BeL}.

Qian \cite{Qia} was the first who considered approximate isometries on arbitrary Bancah spaces in the absence of the surjectivity assumption. 
Let us recall the famous Mazur-Ulam theorem \cite{MaU} stating that every surjective isometry $f$ between two real Banach spaces satisfying $f(0)=0$ is linear.
A nonsurjective extension of this classical theorem was obtained by Figiel (see \cite{Fig} or \cite[14.2]{BeL}). He proved that for any isometry $f: X \to Y$ between two real Banach spaces
with $f(0) = 0$ there is a linear operator $T$ of norm one mapping the closure of the linear span of the range of $f$ onto $X$ such that the product $Tf$ is the identity on $X$.
As observed by Qian the approximate version of Figiel's theorem does not hold for general Banach spaces. 
We will formulate here a result from \cite{SeV} that links Figiel's theorem with Theorem \ref{maje}.

\begin{theorem}\label{zjus}\cite[Theorem 2.6]{SeV} 
Let $H,K$ be Hilbert spaces and $f : H \to K$ an $\varepsilon$-isometry with $f(0) = 0$. Then
there is a continuous linear operator $T : K \to H$ with $\| T \| = 1$ such that
\begin{equation}\label{prd}
\| T f(x) - x \| \le 2 \varepsilon
\end{equation}
for every $x\in H$.
\end{theorem}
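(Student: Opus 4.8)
The plan is to build a linear isometry $V : H \to K$ and then take $T = V^{*}$; the point of a Figiel-type statement is that, without surjectivity, one cannot hope for $f$ itself to be close to $V$, but the adjoint $V^{*}$ still provides an approximate left inverse. The engine of the argument is that an $\varepsilon$-isometry with $f(0)=0$ approximately preserves inner products. Starting from $|\,\|f(x)\|-\|x\|\,|\le\varepsilon$ (immediate from the defining inequality and $f(0)=0$) and the polarization identity $2\langle u,v\rangle=\|u\|^{2}+\|v\|^{2}-\|u-v\|^{2}$, I would first square the three norm bounds and collect terms to obtain
\[
|\langle f(x),f(y)\rangle-\langle x,y\rangle|\le\varepsilon\bigl(\|x\|+\|y\|+\|x-y\|\bigr)+\frac{3}{2}\varepsilon^{2}
\]
for all $x,y\in H$.

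Next I would homogenize to kill the lower-order error. Set $U_{n}x=2^{-n}f(2^{n}x)$. Rescaling the displayed estimate by $2^{-(n+m)}$ gives $\langle U_{n}x,U_{m}z\rangle\to\langle x,z\rangle$ whenever $\min\{m,n\}\to\infty$, and in particular $\|U_{n}x\|\to\|x\|$. Since $(U_{n}x)_{n}$ is bounded and closed balls of a Hilbert space are weakly compact, I would fix a free ultrafilter $\mathcal{U}$ on $\N$ and let $Vx$ be the weak ultralimit of $(U_{n}x)$. Each $Vx$ lies in $K_{0}:=\overline{\linspan\{f(u):u\in H\}}$, and since the linear span of $\{U_{m}z:m\in\N,\ z\in H\}$ is dense in $K_{0}$, testing against this family and using the inner-product convergence yields both the linearity of $V$ and $\|Vx\|=\|x\|$. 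Thus $V:H\to K$ is a linear isometry and $\|V^{*}\|=\|V\|=1$.

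Finally, put $T=V^{*}$. For $x,h\in H$ we have $\langle Tf(x)-x,h\rangle=\langle f(x),Vh\rangle-\langle x,h\rangle=\lim_{\mathcal{U}}\bigl(2^{-n}\langle f(x),f(2^{n}h)\rangle-\langle x,h\rangle\bigr)$, and applying the inner-product estimate to the pair $x,\,2^{n}h$ shows that the $n$-th term is bounded in modulus by $2\varepsilon\|h\|+O(2^{-n})$, so its limit along $\mathcal{U}$ has modulus at most $2\varepsilon\|h\|$. Taking the supremum over $\|h\|\le 1$ gives $\|Tf(x)-x\|\le 2\varepsilon$, as claimed. The main obstacle is the construction of the linearization $V$: because surjectivity is dropped, $(U_{n}x)$ need not converge in norm, so $V$ must be extracted by a weak-compactness argument, and one has to verify carefully that the weak ultralimit is genuinely linear and isometric — this is exactly where the density of $\{U_{m}z\}$ in $K_{0}$ and the uniformity of the rescaled estimates are needed.
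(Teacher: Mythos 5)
Your overall architecture --- approximate preservation of inner products via polarization, rescaling $U_n x = 2^{-n} f(2^n x)$, extraction of a linearization $V$, and then $T = V^{*}$ --- is sound, and your final adjoint computation is correct and does deliver the sharp constant $2\varepsilon$. (For the record, the paper does not prove this theorem at all; it quotes it from \cite{SeV}, so there is no proof of it here to compare against.) However, there is a genuine gap in your linearization step, namely in the claim that density of $\linspan \{ U_m z \}$ in $K_0$ together with the inner-product convergence ``yields both the linearity of $V$ and $\|Vx\| = \|x\|$.'' What testing actually gives you is this: for $v := V(ax+by) - aVx - bVy \in K_0$ one gets $|\langle v, U_m z \rangle| \le C 2^{-m}$ with $C$ independent of $m$ and $z$. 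Unwinding the definition of $U_m$, this says exactly that $\sup_{u \in H} |\langle v, f(u) \rangle| \le C$, and that does \emph{not} force $v = 0$, even for $v \in K_0$; density cannot rescue the argument, because the finite combinations of the $U_m z$ that approximate $v$ have uncontrolled coefficients (errors of size $2^{-m}$ get multiplied by coefficients that may be of size $2^{m}$ or larger). A concrete counterexample to the implication you invoke: with $B = \varepsilon/\sqrt{2}$, let $f : \R \to \R \oplus \ell^2$ be defined by $f(0)=0$ and $f(t) = (t, B e_{\lceil |t| \rceil})$ for $t \neq 0$. This is an $\varepsilon$-isometry with $f(0)=0$, its $K_0$ is all of $\R \oplus \ell^2$, and the vector $v = (0,e_1) \neq 0$ satisfies $\langle v, U_m z \rangle \in \{0, 2^{-m}B\}$ for all $m$ and $z$, hence vanishes in the limit uniformly in $z$. (The theorem's conclusion of course still holds for this $f$; it is only your mechanism for proving linearity that this example defeats.)

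The gap is repairable, but by a different mechanism: evaluate inner products of two weak ultralimits by two successive limit passages, each against a \emph{fixed} vector,
\begin{equation*}
\langle Vx, Vz \rangle \;=\; \lim_{\mathcal{U},\, n} \langle U_n x, Vz \rangle \;=\; \lim_{\mathcal{U},\, n} \, \lim_{\mathcal{U},\, m} \langle U_n x, U_m z \rangle \;=\; \langle x, z \rangle ,
\end{equation*}
where the last equality uses your two-index estimate (for fixed $n$ the $m$-ultralimit differs from $\langle x,z\rangle$ by at most $2\varepsilon 2^{-n}\|x\|$, which then dies in the $n$-ultralimit). Exact preservation of inner products gives $\|Vx\| = \|x\|$ at once, and linearity by expanding $\|V(ax+by) - aVx - bVy\|^2$ into inner products. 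Alternatively --- and this also corrects your closing remark --- your premise that ``because surjectivity is dropped, $(U_n x)$ need not converge in norm'' is false: norm convergence of $2^{-n} f(2^n x)$ requires no surjectivity and is precisely Theorem 1 of Hyers and Ulam \cite{HyU}, which this paper quotes at the start of Section 3 and uses to build the isometry $U$ in its main theorem. Taking $V$ to be that norm limit eliminates the ultrafilter and weak-compactness apparatus entirely, after which your adjoint computation goes through verbatim and yields $\|Tf(x) - x\| \le 2\varepsilon$ with $T = V^{*}$ of norm one.
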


For similar results we refer to \cite{CCTZ, ChD, CDZ, ChZ, Qia}.

At first glance Theorem \ref{zjus} looks as the optimal nonsurjective extension of Theorem \ref{maje}. 
But after thinking for a while it seems natural to ask if we can get something more. Namely, the geometry of Hilbert spaces is much simpler
than in general Banach spaces. In particular, every isometry (not necessarily surjective) acting between Hilbert spaces $H$ and $K$ and sending the origin of $H$ to the origin of $K$ is linear. Indeed, let $x,y \in H$. Then the only metric midpoint between $x$ and $y$ is the algebraic midpoint ${ x + y \over 2}$, and therefore every isometry $f : H \to K$
satisfies
$$
f \left( { x + y \over 2 } \right) = {1 \over 2} (f(x) + f(y))
$$
for all pairs of vectors $x,y$. It follows easily that $f$ must be linear.

Hence, when specializing to real Hilbert spaces we do not
need the surjectivity assumption in the Mazur-Ulam theorem. But on the other hand, already Hyers and Ulam observed that the surjectivity assumption is an essential assumption in Theorem \ref{maje}.
Thus, the situation with approximate isometries is definitely more involved than with isometries. 

Nevertheless, one may argue that when dealing with approximate isometries of Hilbert spaces it is not natural to formulate the stability result in the form suggested by Figiel's theorem  as in (\ref{prd}). 
We will present a result that explains the stability of non-surjective approximate isometries acting on Hilbert spaces much better than Theorem \ref{zjus}. Roughly speaking we will show that there exists a linear isometry $U : H \to K$ approximating $f$ in the sense that when decomposing $K$ into the orthogonal direct sum $K = {\rm Im}\, U \oplus ( {\rm Im}\, U )^\perp$ and when restricting our attention to the first summand we have the same situation as in Theorem \ref{maje} with the optimal bound $2\varepsilon$ instead of $10\varepsilon$, while the projection of $f$ on the second summand is small. Here, ${\rm Im}\, U$
denotes the image of $U$.

\begin{theorem}\label{tooe}
Let $H,K$ be Hilbert spaces and $f : H \to K$ an $\varepsilon$-isometry with $f(0) = 0$. Then
there exists a linear isometry $U : H \to K$ such that 
\begin{equation}\label{uzq}
\| P f(x) - Ux \| \le 2 \varepsilon
\end{equation}
and
\begin{equation}\label{zzzt}
\| (I-P) f(x) \| \le \sqrt{6 \varepsilon \| x \|  + \varepsilon^2 }
\end{equation}
for every $x \in H$. Here, $P : K \to K$ is the orthogonal projection of $K$ onto the image of $U$.
\end{theorem}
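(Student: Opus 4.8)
The plan is to take the operator $T\colon K\to H$ furnished by Theorem~\ref{zjus} and to show that the isometry we are after can be taken to be its adjoint, $U:=T^*\colon H\to K$. Everything hinges on one structural observation: although Theorem~\ref{zjus} only asserts $\|T\|=1$, the estimate $\|Tf(x)-x\|\le 2\varepsilon$ in fact forces $T$ to be a \emph{co-isometry}, i.e. $TT^*=I_H$. Granting this, $T^*$ is automatically a linear isometry of $H$ into $K$ with closed range, and the orthogonal projection $P$ onto $\Image U=\Image T^*$ is exactly $T^*T$: indeed $TT^*=I_H$ makes $T^*T$ self-adjoint and idempotent, and one checks at once that its range is $\Image T^*$. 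The two inequalities will then follow by short computations, so the genuine content of the proof is the co-isometry property.

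To establish $TT^*=I_H$ I would show that $T^*$ is bounded below by $1$; combined with $\|T^*\|=\|T\|=1$ this yields $\|T^*h\|=\|h\|$ for all $h$, hence $T^*$ is isometric and $TT^*=I_H$. Fix $x\in H$, $x\neq 0$, and test the vector $T^*x$ against the images $f(tx)$, $t>0$:
\[
\|T^*x\|\ \ge\ \frac{\langle T^*x,\,f(tx)\rangle}{\|f(tx)\|}\ =\ \frac{\langle x,\,Tf(tx)\rangle}{\|f(tx)\|}.
\]
Writing $Tf(tx)=tx+r_t$ with $\|r_t\|\le 2\varepsilon$ (Theorem~\ref{zjus}) and using $\|f(tx)\|\le t\|x\|+\varepsilon$ (which holds since $f$ is an $\varepsilon$-isometry with $f(0)=0$), the right-hand side is at least $(t\|x\|^2-2\varepsilon\|x\|)/(t\|x\|+\varepsilon)$, which tends to $\|x\|$ as $t\to\infty$; hence $\|T^*x\|\ge\|x\|$. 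This passage to the asymptotic directions of $f$ is the step I expect to be the crux: it is precisely where the missing surjectivity of $f$ is compensated, by noticing that $T$ must be norm-preserving in the limit along $\Image f$.

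With $TT^*=I_H$ in hand I set $U=T^*$ and $P=T^*T$. For (\ref{uzq}) I would write $Pf(x)-Ux=T^*\!\left(Tf(x)-x\right)$ and invoke that $T^*$ is isometric, so that $\|Pf(x)-Ux\|=\|Tf(x)-x\|\le 2\varepsilon$ comes directly from Theorem~\ref{zjus}. For (\ref{zzzt}) I would deduce from (\ref{uzq}) that $\|Pf(x)\|\ge\|Ux\|-2\varepsilon=\|x\|-2\varepsilon$, and then combine the Pythagorean identity $\|(I-P)f(x)\|^2=\|f(x)\|^2-\|Pf(x)\|^2$ with $\|f(x)\|\le\|x\|+\varepsilon$. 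For $\|x\|\ge 2\varepsilon$ this gives $\|(I-P)f(x)\|^2\le(\|x\|+\varepsilon)^2-(\|x\|-2\varepsilon)^2=6\varepsilon\|x\|-3\varepsilon^2\le 6\varepsilon\|x\|+\varepsilon^2$, while the remaining range $\|x\|\le 2\varepsilon$ is absorbed by the trivial bound $\|(I-P)f(x)\|\le\|f(x)\|\le\|x\|+\varepsilon$, which is itself $\le\sqrt{6\varepsilon\|x\|+\varepsilon^2}$ whenever $\|x\|\le 4\varepsilon$. This would complete the argument.
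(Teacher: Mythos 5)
Your proposal is correct, and it reaches the theorem by a genuinely different route than the paper. The paper first constructs $U$ independently of $T$, as the Hyers--Ulam limit $Ux=\lim_{n\to\infty}2^{-n}f(2^nx)$, and then analyzes the operator $T$ of Theorem \ref{zjus} against this $U$: applying (\ref{prd}) at the points $2^nz$ and letting $n\to\infty$ gives $TU=I$, and a separate argument (testing $\|T\|=1$ on vectors $t(Uw)+u$ with $u\perp\Image U$) shows that $T$ annihilates $(\Image U)^\perp$, whence $T=U^{-1}P$ and (\ref{uzq}) follows; the bound (\ref{zzzt}) is then obtained by an independent geometric argument, intersecting the two balls centered at $0$ and at $f(kx)$, which in fact yields the stronger pointwise estimate $\|f(x)-Ux\|\le\sqrt{6\varepsilon\|x\|+\varepsilon^2}$ on the \emph{full} difference rather than only on its component in $(\Image U)^\perp$. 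You avoid constructing $U$ as a limit altogether: your co-isometry lemma ($TT^*=I_H$, proved by the asymptotic Cauchy--Schwarz estimate along the rays $f(tx)$, $t\to\infty$) extracts the isometry directly as $U=T^*$ with $P=T^*T$, and this single ray argument does simultaneously the work of the paper's two steps ($TU=I$ and the vanishing of $T$ on $(\Image U)^\perp$), while also dispensing with Hyers--Ulam's limit theorem. Your deduction of (\ref{zzzt}) from (\ref{uzq}), the Pythagorean identity, and $\|f(x)\|\le\|x\|+\varepsilon$ is likewise much shorter than the paper's ball argument and, for $\|x\|\ge 2\varepsilon$, even gives the slightly sharper value $6\varepsilon\|x\|-3\varepsilon^2$ under the square root; what it does not recover is the paper's stronger conclusion about $\|f(x)-Ux\|$ itself. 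Two small points you should make explicit in a final write-up: the implication from ``$\|T^*x\|=\|x\|$ for all $x$'' to $TT^*=I_H$ requires polarization applied to the self-adjoint operator $TT^*-I$ (self-adjointness is essential here, since the scalars are real), and the Cauchy--Schwarz step needs $f(tx)\ne 0$, which holds for all large $t$ because $\|f(tx)\|\ge t\|x\|-\varepsilon$.
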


\noindent
Clearly, this is an improvement of Theorem \ref{zjus}. Indeed, let $U^{-1}$ denote the inverse of a bijective linear isometry $U : H \to {\rm Im}\, U$. 
The inequality (\ref{uzq})
yields $\| U^{-1} (P f(x) - Ux) \| \le 2 \varepsilon$, $x\in H$, which further implies $\| Tf(x) - x \| \le 2 \varepsilon$, $x\in H$,
where $T = U^{-1}P$ is a bounded linear operator of norm one. Let us write $f$ as $f = Pf + (I-P)f$. The new ingredient is the estimate (\ref{zzzt}) showing that the orthogonal projection of $f$ onto the orthogonal complement of the image of $U$ is small.

In the next section we will discuss the optimality of the bounds given in (\ref{uzq}) and (\ref{zzzt}). The last section will be devoted to the proof of Theorem \ref{tooe}.

\section{The optimality of the main theorem}

The inequality (\ref{uzq}) is known to be sharp even in the special case when $f$ is surjective - in this case $U$ is bijective and $P$ is the identity on $K$.

Next, we turn to the optimality of (\ref{zzzt}). On the right hand side of the inequality we have the estimate of the form $\sqrt{ A \| x \| + B }$ and we would like to know what are the optimal values
of the constants $A$ and $B$.
Let ${\R }^2$ be equipped with the Euclidean norm. It is easy to check that the map
$f : {\R } \to {\R }^2$ defined by
$$
f(t) = \left\{ \begin{matrix} { (t,0) & : & t \le 0 \cr (t, \sqrt{ 2 \varepsilon t}) & : & t \ge 0 } \end{matrix} \right.
$$
is an $\varepsilon$-isometry. Every linear isometry $W : {\R } \to {\R }^2$ is of the form $Wt = t (p,q)$, $t \in \R$, where $(p,q)$ is some vector of norm one. If $(p,q) \not= ( \pm 1,0)$
and $P : {\R }^2 \to {\R }^2$ is the orthogonal projection onto the image of $W$ then obviously there exists a negative real number $s$ such that
$$
\| P f(s) \| = \| P (s,0) \| < |s| - 2 \varepsilon,
$$
and therefore,
$$
\| P f(s) - Ws \| \ge \| Ws \| - \| P f(s) \| > |s| - (|s| - 2 \varepsilon) = 2 \varepsilon.
$$
When $(p,q) = (-1,0)$  we have $\| P f(s) - Ws \| = 2|s| > 2 \varepsilon$ for every negative $s\in \mathbb{R}$ with $|s| > \varepsilon$.
 
Hence, the isometry $U$ appearing in the conclusion of Theorem \ref{tooe} for our particular $\varepsilon$-isometry $f$ is defined by $Ut = (t,0)$, $t \in \R$. If we denote by $P$ the orthogonal projection onto the image of
$U$ then $\| (I-P) f(t) \| = \| (0, \sqrt{ 2 \varepsilon t}) \| = \sqrt{ 2 \varepsilon t}$, $t >0$, showing that the best possible value of $A$ is no smaller than $2\varepsilon$. We were unable
to find the sharp value of $A \in [2 \varepsilon ,6\varepsilon]$.

The next example shows that the value $B= \varepsilon^2$ in (\ref{zzzt}) is optimal. Let $\delta >0$.
It is trivial to verify that the map
$f : {\R } \to {\R }^2$ defined by
$$
f(t) = \left\{ \begin{matrix} { (t,0) & : & t \not= \delta \cr (\delta , \varepsilon) & : & t = \delta} \end{matrix} \right.
$$
is an $\varepsilon$-isometry and as above we can easily see
that the unique isometry $U$ appearing in the conclusion of the above theorem for our particular $\varepsilon$-isometry $f$ is defined by $Ut = (t,0)$, $t \in \R$. By our theorem we know that
$$
\| (I-P) f(t) \| \le \sqrt{ A | t | + B }
$$
for some positive constants $A \le 6 \varepsilon $ and $B \le \varepsilon^2$ and we want to verify that $B$ cannot be smaller than $\varepsilon^2$. If we insert $t= \delta$ in the above inequality we arrive at
$$
\varepsilon  \le \sqrt{ A \delta + B } .
$$
This has to be true for every positive $\delta$ and sending $\delta$ to zero we easily conclude that $B$ must be equal to $\varepsilon^2$.

\section{Proof of the stability result for nonsurjective approximate isometries of Hilbert spaces}

In this section we will prove Theorem \ref{tooe}.

\begin{proof} [Proof of Theorem \ref{tooe}]
We first recall Theorem 1 from the Hyers-Ulam paper \cite{HyU}:
Let $H,K$ be Hilbert spaces and $f : H \to K$ an $\varepsilon$-isometry satisfying $f(0) = 0$. Then the limit
$$
Ux = \lim_{n \to \infty} {1 \over 2^n} f\left( 2^n x \right)
$$
exists for every $x\in H$. The map $x \to Ux$ is an isometry.

In fact, Hyers and Ulam have considered only $\varepsilon$-isometries mapping $H$ into itself. But exactly the same proof works also
for $\varepsilon$-isometries mapping $H$ into some other Hilbert space $K$.

Let $U$ be as above and $T$ as in Theorem \ref{zjus}. Then for every $z \in H$ and every positive integer $n$ we have
$$
\| T f(2^n z) - 2^n z \| \le 2 \varepsilon.
$$
Dividing by $2^n$, sending $n$ to infinity, and using linearity and continuity of $T$ we conclude that
$$
(TU)z = z
$$
for every $z \in H$.

Now we apply the fact that $U$ is linear. In particular, if we denote the image of $U$ by $K_1$, then $K_1$ is a closed subspace of $K$. Let $K_2$ denote its orthogonal
complement. By $U^{-1} : K_1 \to H$ we will denote the inverse of the linear isometry $U$ considered as a bijective isometry from $H$ onto $K_1$.

If $x+y$, $x \in K_1$, $y\in K_2$, is an arbitrary vector in $K$, then because of $(TU)z = z$, $z \in H$, we have
$$
Tx = U^{-1}x.
$$
In the next step we will verify that $Ty = 0$.

Assume on the contrary that $Tu = w \not= 0$ for some unit vector $u \in K_2$. 
Denote $a = \| w \| >0$. For every positive real $t$ we have
$$
T \left( t (Uw) + u \right) = (t+1) w.
$$
If we choose a positive real number $t$ such that
$$
t > { 1 - a^2 \over 2a^2} ,
$$
then a straightforward calculation shows that
$$
1 < { (t+1)a \over \sqrt {t^2 a^2 + 1} } = { \left\| T \left( t (Uw) + u \right)  \right\| \over \|  t (Uw) + u \| },
$$
contradicting the fact that $\| T \| = 1$. 

Hence,  for every $x+y \in K_1 \oplus K_2 = K$ we have 
\begin{equation}\label{psmkon}
T(x+y) = U^{-1}x.
\end{equation}

Let $z\in H$ be any vector. Then there are unique vectors $f_1 (z) \in K_1$ and $k(z) \in K_2$ such that
$$
f(z) = f_1 (z) + k(z).
$$
We define a map $h : H \to K_1$ by $h(z) = f_1 (z) -Uz$, $z \in H$.
From (\ref{prd}) we conclude that
$$
\| T ( Uz + h(z) + k(z) ) - z \| \le 2 \varepsilon
$$
which together with (\ref{psmkon}) yields that $\|  (z + U^{-1} h(z)) - z \| \le 2 \varepsilon$. Applying the fact that $U$ is an isometry
we arrive at
\begin{equation}\label{mvcc}
\| h(z) \| \le 2 \varepsilon
\end{equation}
for every $z \in H$.

We denote by $P : K \to K$ the orthogonal projection onto $K_1$.
Then clearly,
$$
Pf = f_1 \ \ \ {\rm and} \ \ \ (I - P) f = k.
$$
Thus, (\ref{uzq}) follows directly from (\ref{mvcc}), and we only need to check that
$$
\| k(x) \| \le  \sqrt{6 \varepsilon \| x \| + \varepsilon^2}
$$
for every $x\in H$. Because $I-P$ is the orthogonal projection onto the orthogonal complement of ${\rm Im}\, U$ it is enough
to show that
\begin{equation}\label{mile}
\| f(x) - Ux \| \le  \sqrt{6 \varepsilon \| x \| + \varepsilon^2}
\end{equation}
for every $x \in H$.

Let $x\in H$ be any nonzero vector and $k$ any positive integer. Set $\| x \| = r$. Since $f(0) = 0$ and $f$ is an $\varepsilon$-isometry we have
\begin{equation}\label{mil1}
| \, \| f(x) \| - r  \, | \le \varepsilon,
\end{equation}
\begin{equation}\label{mil2}
| \, \| f(kx) - f(x) \| -  (k-1)r  \, | \le \varepsilon,
\end{equation}
and
\begin{equation}\label{mil3}
| \, \| f(kx) \| - kr \, | \le \varepsilon .
\end{equation}
We denote
$$
B_1 = \{ y \in K \, : \, \| y \| \le r + \varepsilon \}
$$
and
$$
B_2 = \{ y \in K \, : \, \| y - f(kx) \| \le (k-1) r + \varepsilon \}.
$$
From (\ref{mil1}) and (\ref{mil2}) we get that  $f(x) \in B_1 \cap B_2$.

For an arbitrary vector $y  \in B_1 \cap B_2$ we have
$$
\| y - f(kx) \|^2 = \| y \|^2 + \| f(kx) \|^2 - 2 \langle y , f(kx) \rangle \le ( (k-1)r + \varepsilon)^2
$$
and therefore,
$$
\left\| y - {1 \over k} f(kx) \right\|^2 = \| y\|^2 + {1 \over k^2} \| f(kx) \|^2 - {2 \over k} \langle y, f(kx) \rangle  
$$
$$
= {1 \over k} ( \| y \|^2 + \| f(kx) \|^2 - 2 \langle y , f(kx) \rangle ) + {k-1 \over k} \| y \|^2 + {1-k \over k^2} \| f(kx) \|^2 
$$
$$
\le {1 \over k} ( (k-1)r + \varepsilon)^2 + {k-1 \over k} \| y \|^2 - {k-1 \over k^2} \| f(kx) \|^2 .
$$
Assume that the positive integer $k$ satisfies  $kr-\varepsilon > 0$.
Applying $\| y \| \le r+ \varepsilon$ and (\ref{mil3}) which yields that $\| f(kx) \| \ge kr - \varepsilon$ we obtain after a straightforward computation that
$$
\left\| y - {1 \over k} f(kx) \right\|^2 \le 6\varepsilon r  \left( 1 - {1 \over k}  \right)
+ \varepsilon ^2 \left( 1 - {1 \over k} + {1 \over k^2} \right).
$$
Putting into the last inequality $y= f(x) \in B_1 \cap B_2$, $k= 2^n$, and sending $n$ to infinity we get the desired inequaltiy (\ref{mile}).
\end{proof}

\end{document}